\newtheorem{theorem}{Theorem}[section]
\newtheorem{corollary}[theorem]{Corollary}
\newtheorem{lemma}[theorem]{Lemma}
\theoremstyle{definition}
\newtheorem{definition}[theorem]{Definition}
\newtheorem{remark}[theorem]{Remark}
\newcommand\iso{\kern.35em{\raise3pt\hbox{$\sim$}\kern-1.1em\to}\kern.3em}
\begin{document}

\begin{frontmatter}

\title{On the $K$-theory of feedback actions on linear systems\tnoteref{label2}}
\tnotetext[label2]{Some results of this work were presented as a talk at the meeting \textsl{Recent Trends in Rings and Algebras}, RTRA2013, Universidad de Murcia, Spain. June, 3rd, 2013.}
\author{Miguel V. Carriegos\tnoteref{label1}}
\tnotetext[label1]{Partially supported by Ministerio de Ciencia y Tecnología and INTECO. Ministerio de Industria, Spain.}
\ead{miguel.carriegos@unileon.es}
\author{ Ángel Luis Muñoz Castañeda}
\ead{u85212@usal.es}
\address{Departamento de Matem\'aticas. Universidad de Le\'on}

\begin{abstract}
A categorical approach to linear control systems is introduced. Feedback actions on linear control systems arises as a symmetric  monoidal category $S_R$. Stable feedback isomorphisms generalize dynamic enlargement of pairs of matrices. Subcategory of locally Brunovsky linear systems $B_{R}$ is studied and the stable feedback isomorphisms of locally Brunovsky linear systems are characterized by the Grothendieck group $K_{0}(B_R)$. Hence a link from linear dynamical systems to algebraic K-theory is stablished.
\end{abstract}

\begin{keyword}
feedback equivalence \sep commutative ring \sep dynamic enlargement \sep stable equivalence \sep $K$-theory invariants

\MSC[2008] 93B10 \sep  15A21 \sep 13C10
\end{keyword}

\end{frontmatter}

\section{Introduction}

This paper deals with the study of feedback actions on a linear control system. A concrete description of the feedback classification of constant linear systems by means of sets of invariants and canonical forms  goes back to the seminal works by Kalman, Casti and Brunovsky (see the fundamental references \cite{B}, \cite{K} and \cite{C}).

The more general framework of parametrized families of linear systems (see \cite{HS} or \cite{VW}) is proved to be a hard task (wild problem in the sense of representation theory  \cite{BK01}). Thus we need to restrict ourselves to the class of locally Brunovsky systems because a complete description of feedback invariants is available (see \cite{M}).

On the other hand we also are interested in the so-called dynamic feedback equivalence of linear systems (see \cite{BK}, \cite{HT}, \cite{HCT} as main references). This dynamic study is based in the addition of some suitable ancillary state variables to systems \cite[ch. 4]{BBV}. We introduce the notion of stable feedback equivalence and show that it is a generalization of both feedback and dynamic feedback equivalence. This generalization does not go too far because if the base ring is a field then feedback, dynamic feedback and stable feedback equivalence are proved to be the same notion. 

We think Category Theory is an adequate tool to study above subjects. First of all, the definition of the category $S_R$ of linear systems over a commutative ring and feedback actions arises in a natural way, more over feedback equivalences are the isomorphisms in the category. Then dynamic enlargements and stabilization of linear systems are consequence of some bi-product (both product and coproduct) in the category, hence the symmetric  monoidal structure of the category arises, and therefore we obtain that the stable feedback equivalences are the stable isomorphisms in the category. As a consequence the invariant characterizing the stable equivalence is the $K_0$ group of the $K$-theory of the category, which is just the Grothendieck group completion of the monoidal structure when possible (i.e. when the isomorphism classes in the category is a set).

The paper is organized as follows. Section 2 is devoted to review main definitions used in the paper: linear system, feedback isomorphism,  direct sum of linear systems and dynamic isomorphism. These notions generalize respectively the standard notions of pair of matrices, feedback equivalence, dynamic enlargement and dynamic feedback equivalence. We also define stable isomorphism of linear systems as adequate generalization for our purposes of both feedback and dynamic isomorphism.

Section 3 is the core section of the paper devoted to the stable classification of linear systems. We prove that the pair $$S_R=(\textsl{linear systems, feedback morphims})$$ is a category whose isomorphisms are precisely feedback isomorphisms. Thus feedback classification of linear systems is just given by the classes of isomorphisms $(S_R)^{\text{iso}}$. Reachable systems $A_R$ and locally Brunovsky systems $B_R$ arise as subcategories of $S_R$ equipped with the same homomorphisms: the feedback actions.

We define the operation $\oplus$ on linear systems and show that: $(a)$ 'sum' operation $\oplus$ is both the categorical  product and coproduct in the categories of linear systems; $(b)$ dynamic enlargement of a linear system now arises as the 'sum' of the system with a trivial one; and $(c)$ categories of linear systems equipped with $\oplus$ operation are symmetric monoidal, see \cite{ML} or \cite{W}.

Section 3 concludes with a charaterization of stable equivalence in $B_R$ (locally Brunovsky systems) in terms of first $K$-theory group $K_0(B_R)$ of the category of locally Brunovsky systems.

Finally, section 4 is devoted to compute effectively $K_0(B_R)$ as the Grothendieck group completion of the monoid $(B_R)^{\text{iso}}$ of locally Brunovsky systems up to feedback isomorphisms.  

\section{Stable feedback isomorphisms between linear systems}

Let $R$ be a conmutative ring with $1\neq 0$. In this section we introduce the dynamic and stable feedback isomorphisms of linear systems over $R$.

\begin{definition}[cf. \cite{HS}]
A linear system is a triple $\Sigma=(X,f,B)$ where $X$ is a $R$-module, $f:X\rightarrow X$ an endomorphism and $B\subset X$ a submodule.
\end{definition}

\begin{definition}[cf. \cite{M}]
Two linear systems $\Sigma_{1}=(X_{1},f_{1},B_{2})$ and $\Sigma_{2}=(X_{2},f_{2},B_{2})$ are feedback isomorphic (f.i.) if there exist an isomorphism of $R$-modules between the state-spaces $\phi: X_{1}\cong X_{2}$ such that 
\begin{enumerate}
\item $\phi(B_{1})= B_{2}$
\item $Im(f_{2}\circ\phi-\phi\circ f_{1})\subset B_{2}$
\end{enumerate}
\end{definition}

Recall that a pair of matrices $(A,B)\in R^{n\times n}\times R^{n\times m}$ defines a linear system
\begin{equation}
(A,B)\mapsto \Sigma_{A,B}=(R^{n},A,Im(B))
\end{equation}
Note that this correspondence is neither injective nor surjective. On the other hand, feedback isomorphism of linear systems is a generalization of the feedback equivalence of pairs of matrices $(A,B)$ in the following sense: Suppose that pairs of matrices$(A_{1},B_{1})$ and $(A_{2},B_{2})$ are feedback equivalent, then there exist invertible matrices, $P\in GL_n(R)$ and $Q\in GL_m(R)$ and a matrix $K\in R^{m\times n}$ such that $A_{2}=P(A_{1}+B_{1}K)P^{-1}$ and $B_{2}=PB_{1}Q$. Then it is straightforward to show that the matrix $P$ gives a feedback isomorphism between linear systems $\Sigma_{A_{1},B_{1}}$ and $\Sigma_{A_{2},B_{2}}$.

Two pairs of matrices $(A_{1},B_{1})$ and $(A_{2},B_{2})$ are  dynamic equivalent if the pair of matrices of orderes $(p+n\times p+n)$ and $(p+n\times p+m)$
\begin{equation}
\left(\left(\begin{array}{cc}
\textbf{0} & \textbf{0} \\
\textbf{0} & A_{1}
\end{array}
\right), \left(\begin{array}{cc}
\textbf{1} & \textbf{0} \\
\textbf{0} & B_{1}
\end{array}
\right)\right),\left(\left(\begin{array}{cc}
\textbf{0} & \textbf{0} \\
\textbf{0} & A_{2}
\end{array}
\right), \left(\begin{array}{cc}
\textbf{1} & \textbf{0} \\
\textbf{0} & B_{2}
\end{array}
\right)\right)
\end{equation}
are feedback isomorphic (see \cite{BK}, \cite{HCT}). This is physically realized by introducing free ancillary variables.

Consider the pair of matrices $(A,B)$ and let $\Sigma_{A,B}=(R^{n},A,Im(B))$ be the corresponding linear system. Consider also the linear system $\Gamma(p)=(R^{p},0,R^{p})$. Then the linear system associated to the pair of matrices 
\begin{equation}
\left(\left(\begin{array}{cc}
\textbf{0} & \textbf{0} \\
\textbf{0} & A
\end{array}
\right), \left(\begin{array}{cc}
\textbf{1} & \textbf{0} \\
\textbf{0} & B
\end{array}
\right)\right)
\end{equation}
is precisely $(R^{p}\oplus R^{n},0\oplus A,R^{p}\oplus Im(B))$. This motivates the following definition

\begin{definition}
Let $\Sigma_{i}=(X_{i},f_{i},B_{i})$ ($i=1,2$) be linear systems. The direct sum of $\Sigma_{1}$ and $\Sigma_{2}$ is defined by linear system
\begin{equation}
\Sigma_{1}\oplus\Sigma_{2}=(X_{1}\oplus X_{2},f_{1}\oplus f_{2},B_{1}\oplus B_{2})
\end{equation}
\end{definition}

Throughout the paper, we will use Bass matrix notation for the direct sum of homomorphisms (see \cite{Bass}), thus the matrix {\tiny$\left(
\begin{array}{cc}
f_{1} & \textbf{0}\\
\textbf{0} & f_{2}
\end{array}
\right)$}
actually represents the homomorphism $f_{1}\oplus f_{2}$. Elements of the direct sum of objects, $X_{1}\oplus X_{2}$, will be presented as column vectors ${\tiny\left(
\begin{array}{c}
x_{1}\\
y_{1}
\end{array}
\right)}\in X_{1}\oplus X_{2}$ in order to make the notations consistent.

\begin{definition}
Linear systems $\Sigma_{1}$ and $\Sigma_{2}$ are dynamic feedback isomorphic (d.i.) if there exist $p\in\mathbb{N}$ such that the linear sistems $\Sigma_{1}\oplus\Gamma(p)$ and $\Sigma_{2}\oplus\Gamma(p)$ are feedback isomorphic.
\end{definition} 

\begin{definition}
Linear systems $\Sigma_{1}$ and $\Sigma_{2}$ are stable feedback isomorphic (s.i.) if there exist a linear system $\Gamma$ such that the linear sistems $\Sigma_{1}\oplus\Gamma$ and $\Sigma_{2}\oplus\Gamma$ are feedback isomorphic.
\end{definition}

Of course, the relations f.i., d.i, and s.i. satisfy the axioms for equivalence relations in the category of linear systems.
It is also clear that s.i. is a generalization of d.i. and that the d.i. is a generalization of the f.i..

\begin{equation} 
\Sigma\overset{f.e}{\simeq}\Sigma'\Rightarrow\Sigma\overset{d.e}{\simeq}
\Sigma'\Rightarrow\Sigma\overset{s.e}{\simeq}\Sigma'
\end{equation}
Moreover, if $R=\mathbb{K}$ is a field then it is easy to prove that the three relations are equivalent.
\begin{equation} 
\Sigma\overset{f.i}{\simeq}\Sigma'\Leftrightarrow\Sigma\overset{d.i}{\simeq}
\Sigma'\Leftrightarrow\Sigma\overset{s.i}{\simeq}\Sigma'
\end{equation}

\section{Stable classification of locally Brunovsky linear systems}

This section deals with the classification of linear systems modulo stable feedback isomorphisms. Invariants will be found in some group by using a bit of $K$-theory, thus we need to start with the categorical properties of linear systems.

\subsection{The category of linear systems and its monoidal structure}

In order to construct the category of linear systems we need to define the homomorphisms of the category \cite{ML}.

\begin{definition}\label{DefMorSys}
Let $\Sigma_{1}=(X_{1},f_{1},B_{1})$ and $\Sigma_{2}=(X_{2},f_{2},B_{2})$ be linear systems. A homomorphism between $\Sigma_{1}$ and $\Sigma_{2}$ is a homomorphism of $R$-modules $\phi:X_{1}\rightarrow X_{2}$ such that
\begin{enumerate}
\item $\phi(B_{1})\subset B_{2}$
\item $Im(f_{2}\circ\phi-\phi\circ f_{1})\subset B_{2}$
\end{enumerate}
\end{definition}

\begin{lemma}
The composition law for homomorphisms between space states gives the composition law for homomorphisms between linear systems.
$$\begin{array}{ccc}
Hom(\Sigma_{1},\Sigma_{2})\times Hom(\Sigma_{2},\Sigma_{3}) & \rightarrow & Hom(\Sigma_{1},\Sigma_{3}) \\
(\varphi,\psi) & \longmapsto & \psi\circ \varphi
\end{array}$$
satisfying the associative and identity properties.
\end{lemma}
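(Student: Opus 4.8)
The plan is to verify that the composition rule stated in the lemma is well-defined (the composite of two system homomorphisms is again a system homomorphism), and then that it inherits associativity and the existence of identities from the underlying category of $R$-modules. Concretely, given $\varphi:\Sigma_{1}\to\Sigma_{2}$ and $\psi:\Sigma_{2}\to\Sigma_{3}$, I must exhibit that $\psi\circ\varphi$ satisfies the two conditions of Definition~\ref{DefMorSys} with respect to $\Sigma_{1}$ and $\Sigma_{3}$.

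First I would check condition (1), compatibility with the input submodules. Since $\varphi(B_{1})\subset B_{2}$ and $\psi(B_{2})\subset B_{3}$, applying $\psi$ to the first inclusion gives $(\psi\circ\varphi)(B_{1})=\psi(\varphi(B_{1}))\subset\psi(B_{2})\subset B_{3}$, which is exactly condition (1) for the composite. Next I would verify condition (2), the feedback-commutation condition. The key algebraic observation is the operator identity
\begin{equation}
f_{3}\circ(\psi\circ\varphi)-(\psi\circ\varphi)\circ f_{1}
=\bigl(f_{3}\circ\psi-\psi\circ f_{2}\bigr)\circ\varphi+\psi\circ\bigl(f_{2}\circ\varphi-\varphi\circ f_{1}\bigr),
\end{equation}
which is obtained by inserting and cancelling the middle term $\psi\circ f_{2}\circ\varphi$. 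Evaluating on any $x\in X_{1}$, the first summand lands in $B_{3}$ because $\Img(f_{3}\circ\psi-\psi\circ f_{2})\subset B_{3}$ (condition (2) for $\psi$), and the second summand lands in $\psi(B_{3})$... more precisely it equals $\psi$ applied to an element of $B_{2}$ (condition (2) for $\varphi$), which lies in $\psi(B_{2})\subset B_{3}$ by condition (1) for $\psi$. Hence the total image is contained in $B_{3}$, establishing condition (2).

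Having shown that composition is well-defined, the associativity and identity axioms are essentially inherited. Associativity $(\chi\circ\psi)\circ\varphi=\chi\circ(\psi\circ\varphi)$ holds because it already holds as an equality of $R$-module homomorphisms on the underlying state spaces, and the system structure imposes no further data on the maps themselves. For identities, I would observe that the identity map $\mathrm{id}_{X}:X\to X$ trivially satisfies both conditions of Definition~\ref{DefMorSys} (we have $\mathrm{id}(B)=B\subset B$ and $f\circ\mathrm{id}-\mathrm{id}\circ f=0$, whose image is $0\subset B$), so $\mathrm{id}_{X}$ is a homomorphism $\Sigma\to\Sigma$ that serves as a two-sided unit for the composition.

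I do not anticipate a genuine obstacle here, since the statement is foundational bookkeeping; the only point requiring care is the second summand in the displayed operator identity, where one must use \emph{both} conditions on $\psi$ (not just condition (2)) to conclude that $\psi$ carries the image of $f_{2}\circ\varphi-\varphi\circ f_{1}$ into $B_{3}$. Making that chain of inclusions explicit is the crux of the verification, and everything else reduces to the corresponding facts in the category of $R$-modules.
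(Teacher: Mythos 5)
Your proposal is correct and follows essentially the same argument as the paper: condition (1) for the composite follows by chaining the two inclusions, and condition (2) follows by inserting and cancelling the middle term $\psi\circ f_{2}\circ\varphi$ (the paper does this pointwise with elements $b_{2}\in B_{2}$, $b_{3}\in B_{3}$ rather than as an operator identity, but the content is identical, including the use of both conditions on $\psi$ to see that $\psi(b_{2})\in B_{3}$). Your explicit verification of associativity and identities is a small addition the paper leaves implicit.
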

\begin{proof}
Consider homomorphisms $\Phi_{1}\in Hom(\Sigma_{1},\Sigma_{2})$ and $\Phi_{2}\in Hom(\Sigma_{2},\Sigma_{3})$. Both $\Phi_{1}:X_{1}\rightarrow X_{2}$ and $\Phi_{2}:X_{2}\rightarrow X_{3}$ are homomorphisms of $R$-modules between the state spaces. 

Consider also the homomorphism of $R$-modules $\Phi_{2}\circ\Phi_{1}:X_{1}\rightarrow X_{3}$. By hypothesis $\Phi_{2}(B_{2})\subset B_{3}$ and $\Phi_{1}(B_{1})\subset B_{2}$, hence it is clear that $\Phi_{2}(\Phi_{1}(B_{1}))\subset B_{3}$.

Let us show that $Im(f_{3}\circ(\Phi_{2}\circ\Phi_{1})-(\Phi_{2}\circ\Phi_{1})\circ f_{1})\subset B_{3}$. Let be $x_{1}\in X_{1}$, then there exist $b_{2}\in B_{2}$ such that $\phi_{1}(f_{1}(x_{1}))=f_{2}(\phi_{1}(x_{1}))+b_{2}$ from which it follows that $\phi_{2}(\phi_{1}(f_{1}(x_{1})))=\phi_{2}(f_{2}(\phi_{1}(x_{1})))+\phi_{2}(b_{2})$. Finaly, there exist $b_{3}\in B_{3}$ such that $\phi_{2}(f_{2}(\phi_{1}(x_{1})))+\phi_{2}(b_{2})=f_{3}(\phi_{2}(\phi_{1}(x_{1})))+\phi_{2}(b_{2})+b_{3}$ and since $\phi_{2}(b_{2})\in B_{3}$ the result is proved \end{proof}

\begin{definition}
Let $S_{R}$ denotes the category whose objects are linear systems and the homomorphisms are homomorphisms of linear systems.
\end{definition}

The following Proposition shows that the category $S_{R}$ is the natural framework to address the problem of the feedback classification of linear systems.

\begin{theorem}\label{Risomor}
Isomorphisms in the category $S_{R}$ are exactly the feedback isomorphisms.
\end{theorem}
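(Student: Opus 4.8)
The plan is to prove the two inclusions separately, since ``isomorphism in $S_R$'' means a morphism $\phi$ admitting a two-sided inverse $\psi$ that is \emph{itself} a morphism of systems, whereas ``feedback isomorphism'' asks only that the underlying $R$-module map be bijective and satisfy $\phi(B_1)=B_2$ together with the feedback condition $Im(f_2\phi-\phi f_1)\subset B_2$. So the task is to reconcile the categorical notion of a two-sided inverse with these explicit conditions on state spaces and submodules.

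First I would check that every feedback isomorphism $\phi\colon\Sigma_1\to\Sigma_2$ is an isomorphism in $S_R$. Such a $\phi$ is already a morphism by Definition~\ref{DefMorSys}: condition (1) holds since $\phi(B_1)=B_2\subset B_2$, and condition (2) is exactly the feedback condition. The real content is to verify that the set-theoretic inverse $\psi=\phi^{-1}$, which exists as an $R$-module isomorphism, is again a morphism of systems. Condition (1) for $\psi$ is immediate: $\psi(B_2)=\phi^{-1}(\phi(B_1))=B_1$. For condition (2) I would argue pointwise: given $x_2\in X_2$ write $x_2=\phi(x_1)$, use the feedback relation $f_2(\phi(x_1))=\phi(f_1(x_1))+b_2$ with $b_2\in B_2$, apply $\psi$, and cancel to obtain $(f_1\psi-\psi f_2)(x_2)=-\psi(b_2)$, which lies in $\psi(B_2)=B_1$. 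Hence $Im(f_1\psi-\psi f_2)\subset B_1$, so $\psi$ is a morphism; since $\psi\phi$ and $\phi\psi$ are identities on the underlying modules they are the system identities, and $\phi$ is an isomorphism in $S_R$.

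Conversely, suppose $\phi\colon\Sigma_1\to\Sigma_2$ is an isomorphism in $S_R$ with inverse $\psi$. Then $\psi\phi=\mathrm{id}_{X_1}$ and $\phi\psi=\mathrm{id}_{X_2}$ as $R$-module maps, so $\phi$ is bijective. The feedback condition (2) is part of the hypothesis that $\phi$ is a morphism, so only $\phi(B_1)=B_2$ remains. One inclusion $\phi(B_1)\subset B_2$ is condition (1) for $\phi$; for the reverse I would apply $\phi$ to condition (1) for $\psi$, namely $\psi(B_2)\subset B_1$, obtaining $B_2=\phi\psi(B_2)\subset\phi(B_1)$. This gives equality, so $\phi$ is a feedback isomorphism.

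The only step requiring genuine computation is the verification that the inverse of a feedback isomorphism satisfies the second morphism axiom; everything else is a formal manipulation of the two inclusions using $\phi(B_1)=B_2$. I therefore expect that short pointwise calculation to be the main (and essentially only) obstacle.
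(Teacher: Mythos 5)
Your proposal is correct and follows essentially the same route as the paper: both directions are handled identically, with the only cosmetic difference being that you verify the feedback condition for $\psi=\phi^{-1}$ by a direct pointwise computation while the paper phrases the same computation as a chain of equivalences transported through $\phi$. No gaps.
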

\begin{proof}
Let $\Sigma_{1}$ and $\Sigma_{2}$ linear systems such that $Isom(\Sigma_{1},\Sigma_{2})\neq\emptyset$ and let us take $\phi\in Isom(\Sigma_{1},\Sigma_{2})$. Then there exist $\psi\in Hom(\Sigma_{2},\Sigma_{1})$ such that $\psi\circ\phi=id_{1}$ and $\phi\circ\psi=id_{2}$. This show in particular that the homomorphisms of $R$-modules $\phi:X_{1}\rightarrow X_{2}$ and $\psi:X_{2}\rightarrow X_{1}$ are inverses of each other and, therefore, isomorphims. Now, since $\psi\in Hom(\Sigma_{2},\Sigma_{1})$, we have $\psi(B_{2})\subset B_{1}$. If we take the image via $\phi$ we have $B_{2}\subset\phi(B_{1})$. But $\phi(B_{1})\subset B_{2}$ because $\phi\in Hom(\Sigma_{1},\Sigma_{2})$ so $\phi(B_{1})=B_{2}$ and $\phi$ is a feedback equivalence.

Let us prove the converse. Suppose that $\Sigma_{1}$ and $\Sigma_{2}$ are feedback isomorphic linear systems. Then there exist an isomorphism of $R$-modules $\phi:X_{1}\simeq X_{2}$ such that $\phi(B_{1})=B_{2}$ and $Im(f_{2}\circ\phi-\phi\circ f_{1})\subset B_{2}$. Observe that $\phi$ define a homomorphism of linear systems. In order to prove that $\phi$ is an isomorphism of linear systems we have to find an inverse. Consider the isomorphism of $R$-modules $\phi^{-1}=\psi:X_{2}\rightarrow X_{1}$. It is clear that $\psi(B_{2})=B_{1}$. Let us see that $Im(f_{1}\circ\psi-\psi\circ f_{2})\subset B_{1}$. Consider $x_{2}\in X_{2}$
\begin{equation}
\begin{aligned}
(f_{1}\circ\psi)(x_{2})-(\psi\circ f_{2})(x_{2})\in B_{1}&\Leftrightarrow (f_{1}\circ\psi)(x_{2})-(\psi\circ f_{2})(x_{2})\in \psi(B_{2})\Leftrightarrow \\
&\Leftrightarrow \phi((f_{1}\circ\psi)(x_{2})-(\psi\circ f_{2})(x_{2}))\in B_{2}\Leftrightarrow \\
&\Leftrightarrow (\phi\circ f_{1})(\psi(x_{2}))-f_{2}(x_{2})\in B_{2}\Leftrightarrow \\
&\Leftrightarrow (f_{2}\circ\phi)(\psi(x_{2}))+b_{2}-f_{2}(x_{2})\in B_{2}
\end{aligned}
\end{equation}
for certain $b_{2}\in B_{2}$. But $(f_{2}\circ\phi)(\psi(x_{2}))+b_{2}-f_{2}(x_{2})=b_{2}\in B_{2}$. This prove that $\psi$ define a homomorphism of linear systems and it is clear that $\phi\circ\psi=id_{\Sigma_{2}}$ and that $\psi\circ\phi=id_{\Sigma_{1}}$ as homomorphisms of linear systems.
\end{proof}

Once the category of linear systems is already introduced it is natural to research the algebraic structure induced by the direct sum of linear systems $\oplus$ in this category. We will describe this structure and we will see that $\oplus$ descends to the isomorphism classes of linear systems.

\begin{lemma}
The direct sum of two homomorphisms of linear systems is a homomorphism of linear systems.
\end{lemma}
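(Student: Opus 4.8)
The plan is to take two homomorphisms of linear systems, say $\phi_1\in\Hom(\Sigma_1,\Sigma_1')$ and $\phi_2\in\Hom(\Sigma_2,\Sigma_2')$ with $\Sigma_i=(X_i,f_i,B_i)$ and $\Sigma_i'=(X_i',f_i',B_i')$, and to verify directly that the $R$-module map $\phi_1\oplus\phi_2:X_1\oplus X_2\to X_1'\oplus X_2'$ satisfies the two conditions of Definition \ref{DefMorSys} relative to the summed systems $\Sigma_1\oplus\Sigma_2$ and $\Sigma_1'\oplus\Sigma_2'$. Since a direct sum of $R$-module maps is automatically an $R$-module map, only conditions (1) and (2) need attention, and both will be checked componentwise.

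For condition (1), I would compute $(\phi_1\oplus\phi_2)(B_1\oplus B_2)=\phi_1(B_1)\oplus\phi_2(B_2)$, which is contained in $B_1'\oplus B_2'$ because $\phi_i(B_i)\subset B_i'$ by hypothesis. The key algebraic fact for condition (2) is the bifunctoriality of the direct sum, i.e. that in Bass matrix notation block-diagonal maps multiply block-diagonally:
\begin{equation}
(f_1'\oplus f_2')\circ(\phi_1\oplus\phi_2)=(f_1'\circ\phi_1)\oplus(f_2'\circ\phi_2),\qquad (\phi_1\oplus\phi_2)\circ(f_1\oplus f_2)=(\phi_1\circ f_1)\oplus(\phi_2\circ f_2).
\end{equation}
Subtracting yields $(f_1'\circ\phi_1-\phi_1\circ f_1)\oplus(f_2'\circ\phi_2-\phi_2\circ f_2)$, whose image is $\Img(f_1'\circ\phi_1-\phi_1\circ f_1)\oplus\Img(f_2'\circ\phi_2-\phi_2\circ f_2)$. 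Each summand sits inside the corresponding $B_i'$ by the hypothesis on $\phi_i$, so the total image lies in $B_1'\oplus B_2'$, establishing condition (2).

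The argument is essentially routine, resting on two elementary observations: that the image of a direct sum of maps is the direct sum of the images, and that composition commutes with the formation of direct sums. The only point demanding the least care — and what I would regard as the crux — is the vanishing of the off-diagonal blocks in the product of the block-diagonal matrices representing $f_1'\oplus f_2'$ and $\phi_1\oplus\phi_2$; this is exactly what guarantees that the commutator $(f_1'\oplus f_2')\circ(\phi_1\oplus\phi_2)-(\phi_1\oplus\phi_2)\circ(f_1\oplus f_2)$ decomposes as a direct sum with no cross terms, allowing the two hypotheses to be applied independently on each factor.
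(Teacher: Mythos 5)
Your proof is correct and follows the same route as the paper's: verify condition (1) componentwise and reduce condition (2) to the block-diagonal (Bass matrix) computation showing the commutator splits as a direct sum with no cross terms. The paper dismisses step (2) as "a straightforward calculation by using Bass' matrices," so your write-up simply supplies the details the paper omits.
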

\begin{proof}
Consider the linear systems $\Sigma_{i}=(X_{i},f_{i},B_{i})$ and $\Gamma_{i}=(Y_{i},g_{i},C_{i})$ (i=1,2) and homomorphisms $\phi:\Sigma_{1}\rightarrow \Sigma_{2}$, $\psi:\Gamma_{1}\rightarrow \Gamma_{2}$. Let us take the direct sum 
\begin{equation}
\phi\oplus\psi={\tiny \left(\begin{array}{cc}\phi & \mathbf{0} \\ \mathbf{0} & \psi\end{array}\right)}:X_1\oplus Y_1\rightarrow X_2\oplus Y_2
\end{equation}
In order to check that $\phi\oplus\psi$ is also a morphism of linear systems (see Definition \ref{DefMorSys}) we need to prove $(1)$ $(\phi\oplus\psi)(B_1\oplus C_1)\subset B_2\oplus C_2$, which is clear; and $(2)$
\begin{equation}
\mathrm{Im}\left((f_2\oplus g_2)(\phi\oplus\psi)-(\phi\oplus\psi)(f_1\oplus g_1)\right)\subset B_2\oplus C_2
\end{equation}
which is a straightforward calculation by using Bass' matrices \end{proof}

The direct sum of linear systems defines a bifunctor $\oplus: S_{R}\times S_{R}\rightarrow S_{R}$. We also define the zero system as $Z=(0,0,0)$. It is clear that $Z$ satisfy the identity property for the direct sum of linear systems. Obviously $Z$ is both an initial and final object in $S_{R}$. Now we can consider the category of linear systems with extra structure $(S_{R},\oplus,Z)$.

\begin{lemma}
The direct sum of linear systems $\oplus$ is both a categorical product and co-product in $S_{R}$, that is to say $\oplus$ is a bi-product in the category $S_{R}$.
\end{lemma}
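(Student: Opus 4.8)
The plan is to produce, for systems $\Sigma_1=(X_1,f_1,B_1)$ and $\Sigma_2=(X_2,f_2,B_2)$, the canonical $R$-module projections $\pi_i\colon X_1\oplus X_2\to X_i$ and injections $\iota_i\colon X_i\to X_1\oplus X_2$, and to show that the single object $\Sigma_1\oplus\Sigma_2$ equipped with these maps is simultaneously a product cone and a coproduct cocone in $S_R$. First I would check that each $\pi_i$ and each $\iota_i$ is a homomorphism of linear systems in the sense of Definition \ref{DefMorSys}. Condition $(1)$ is immediate, since $\pi_i(B_1\oplus B_2)=B_i$ and $\iota_i(B_i)\subset B_1\oplus B_2$; condition $(2)$ is even stronger than needed, because these maps commute strictly with the transition endomorphisms, i.e. $f_i\circ\pi_i=\pi_i\circ(f_1\oplus f_2)$ and $(f_1\oplus f_2)\circ\iota_i=\iota_i\circ f_i$, so the relevant images vanish.

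Next, for the product universal property, given a system $\Gamma=(Y,g,C)$ and homomorphisms $\alpha_i\colon\Gamma\to\Sigma_i$, the universal property of $X_1\oplus X_2$ in the category of $R$-modules yields a unique $R$-linear map $\alpha\colon Y\to X_1\oplus X_2$ with $\pi_i\circ\alpha=\alpha_i$, namely $\alpha(y)=(\alpha_1(y),\alpha_2(y))$. I would then verify that $\alpha$ is a morphism in $S_R$: condition $(1)$ holds because each $\alpha_i(C)\subset B_i$, and condition $(2)$ decomposes componentwise, since the $i$-th component of $\big((f_1\oplus f_2)\circ\alpha-\alpha\circ g\big)(y)$ is exactly $(f_i\circ\alpha_i-\alpha_i\circ g)(y)\in B_i$. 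Uniqueness as a system morphism is inherited from uniqueness at the module level, because a homomorphism of linear systems is determined by its underlying $R$-linear map, and any factorization of the $\alpha_i$ through the $\pi_i$ in $S_R$ is in particular such a factorization in $R$-Mod. The coproduct universal property is handled dually, using $\beta(x_1,x_2)=\beta_1(x_1)+\beta_2(x_2)$ for homomorphisms $\beta_i\colon\Sigma_i\to\Gamma$, with the same componentwise check of condition $(2)$.

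Finally, to conclude that $\oplus$ is a biproduct, I would observe that $\mathrm{Hom}_{S_R}(\Sigma,\Sigma')$ is an abelian group under pointwise addition (conditions $(1)$ and $(2)$ are preserved by sums), so $S_R$ is preadditive and the expressions below are meaningful. The four maps above satisfy $\pi_i\circ\iota_j=\delta_{ij}\,\mathrm{id}$ and $\iota_1\circ\pi_1+\iota_2\circ\pi_2=\mathrm{id}$, identities that hold at the module level and whose ingredients are all system morphisms; this exhibits $\Sigma_1\oplus\Sigma_2$ as a biproduct in the standard additive-categorical sense. I do not expect a serious obstacle: the argument rests entirely on the fact that $R$-Mod is additive with finite direct sums as biproducts, transported across the faithful forgetful functor $\Sigma\mapsto X$. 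The one point deserving genuine attention is the verification of the feedback condition $(2)$ for the induced factorizations, but this is precisely the componentwise computation above, which goes through because condition $(2)$ is additive and respects the direct-sum decomposition of the transition maps $f_1\oplus f_2$.
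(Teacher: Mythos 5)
Your proof is correct and follows essentially the same route as the paper's: exhibit the canonical projections and injections as system morphisms and verify the product and coproduct universal properties via the induced Bass-matrix maps. You supply considerably more detail than the paper (which essentially just draws the two universal-property diagrams), and your closing observation that $S_R$ is preadditive with $\pi_i\circ\iota_j=\delta_{ij}\,\mathrm{id}$ and $\iota_1\circ\pi_1+\iota_2\circ\pi_2=\mathrm{id}$ is a correct, slightly stronger formulation of the biproduct claim.
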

\begin{proof}
The universal property of product  (see \cite{ML}) arises from the following picture: Given a system $\Gamma$ and homomorphisms $\psi_i:\Gamma\rightarrow\Sigma_i$, dotted line $\Gamma\rightarrow\Sigma_1\oplus\Sigma_2$ always exists making the following diagram commutative and it is given by the adequate Bass' matrix whose entries are the $\psi_i$
\vspace{0.5cm}
\begin{equation}
\begin{xy}
(90,20)*+{\Sigma_1}="Si1";
(90,0)*+{\Sigma_2}="Si2";
(60,10)*+{\Sigma_1\oplus\Sigma_2}="Si1+Si2";
(30,10)*+{\Gamma}="Ga";
{\ar@{->}^{\pi_1} "Si1+Si2";"Si1"};
{\ar@{->}^{\pi_2} "Si1+Si2";"Si2"};
{\ar@{->}@/^{1.85pc}/^{\psi_1} "Ga"; "Si1"}
{\ar@{->}@/_{1.25pc}/_{\psi_2} "Ga"; "Si2"}
{\ar@{-->}^{{\tiny \left(\begin{array}{c}\psi_1 \\ \psi_2\end{array}\right)}} "Ga"; "Si1+Si2"}
\end{xy}
\end{equation}
\vspace{0.25cm}
One can check the universal property of co-product: Given a system $\Gamma$ and homomorphisms $\phi_i:\Sigma_i\rightarrow\Gamma$, dotted line $\Sigma_1\oplus\Sigma_2\rightarrow\Gamma$ always exists making the following diagram commutative and it is given by the adequate Bass' matrix whose entries are the $\phi_i$
\begin{equation}
\begin{xy}
(60,20)*+{\Sigma_1}="Si1";
(60,0)*+{\Sigma_2}="Si2";
(90,10)*+{\Sigma_1\oplus\Sigma_2}="Si1+Si2";
(120,10)*+{\Gamma}="Ga";
{\ar@{->}^{\iota_1} "Si1";"Si1+Si2"};
{\ar@{->}^{\iota_2} "Si2";"Si1+Si2"};
{\ar@{->}@/^{1.25pc}/^{\phi_1} "Si1"; "Ga"}
{\ar@{->}@/_{1.25pc}/_{\phi_2} "Si2"; "Ga"}
{\ar@{-->}^{(\phi_1,\phi_2)} "Si1+Si2"; "Ga"}
\end{xy}
\end{equation}
\end{proof}

\begin{theorem} Let $R$ be a commutative ring. The category $(S_R,\oplus,Z)$ of linear systems is symmetric monoidal
\end{theorem}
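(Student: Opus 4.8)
The plan is to exhibit the standard structural data of a symmetric monoidal category — an associativity isomorphism, left and right unit isomorphisms, and a symmetry — and then to verify the coherence axioms (pentagon, triangle, and the two hexagons) together with the involutivity of the symmetry. The guiding observation is that the forgetful functor $U\colon S_R\to R\text{-Mod}$ sending $\Sigma=(X,f,B)$ to its state module $X$ is faithful, that it carries $\oplus$ to the direct sum of $R$-modules and $Z$ to $0$, and that $(R\text{-Mod},\oplus,0)$ is already symmetric monoidal. Thus I would take every structural isomorphism to be the underlying canonical isomorphism of $R$-modules, and reduce each coherence identity to the corresponding identity in $R\text{-Mod}$.

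First I would define the structure maps on objects. For systems $\Sigma_i=(X_i,f_i,B_i)$ let $a_{\Sigma_1,\Sigma_2,\Sigma_3}$ be the canonical regrouping $(X_1\oplus X_2)\oplus X_3\to X_1\oplus(X_2\oplus X_3)$, let $\lambda_\Sigma\colon 0\oplus X\to X$ and $\rho_\Sigma\colon X\oplus 0\to X$ be the obvious projections, and let $\gamma_{\Sigma_1,\Sigma_2}\colon X_1\oplus X_2\to X_2\oplus X_1$ be the swap of summands. Each of these underlying module maps is an isomorphism which sends the submodule $B$ exactly onto the target submodule $B$ and which intertwines the endomorphisms strictly, so that the commutator $f'\circ\phi-\phi\circ f$ is identically zero and a fortiori has image contained in the target $B$. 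Hence, by Definition \ref{DefMorSys} and Theorem \ref{Risomor}, each is a feedback isomorphism of linear systems. Naturality of $a$, $\lambda$, $\rho$, $\gamma$ is then the statement that these block permutations commute with block-diagonal morphisms $\phi_1\oplus\phi_2$ (resp. $\phi_1\oplus\phi_2\oplus\phi_3$), which is immediate from the Bass matrix calculus used throughout.

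Next I would dispatch the coherence conditions. Because $U$ is faithful, two morphisms of linear systems are equal as soon as their underlying $R$-module maps agree; every coherence diagram (the pentagon for $a$, the triangle relating $a$ with $\lambda,\rho$, the two hexagons relating $a$ with $\gamma$, and the identity $\gamma_{\Sigma_2,\Sigma_1}\circ\gamma_{\Sigma_1,\Sigma_2}=\mathrm{id}$) is built solely from $\oplus$, $a$, $\lambda$, $\rho$, $\gamma$, so its underlying diagram in $R\text{-Mod}$ is exactly the corresponding coherence diagram for the symmetric monoidal category of $R$-modules, which commutes. Transporting the equality back along the faithful $U$ yields commutativity in $S_R$.

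I expect essentially no serious obstacle here: once the reduction to $R\text{-Mod}$ via the faithful forgetful functor is set up, the verifications are routine. The only point requiring care — and the step I would write out explicitly — is the confirmation that the candidate structure maps are genuine feedback isomorphisms, i.e. that conditions (1) and (2) of Definition \ref{DefMorSys} hold; this is precisely where the strict intertwining of the endomorphisms (zero commutator) is used. Alternatively, and even more briefly, one may invoke the general fact that any category possessing a zero object and finite biproducts is canonically symmetric monoidal, the coherence axioms following from the uniqueness clauses in the universal properties of product and coproduct established in the preceding lemma; this makes the theorem a formal consequence of $\oplus$ being a biproduct and $Z$ a zero object.
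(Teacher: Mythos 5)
Your proposal is correct, but note that the paper's own proof is precisely the one-sentence alternative you offer in your closing lines: it simply observes that $\oplus$ is a biproduct and $Z$ is both initial and terminal, and cites Mac Lane and Weibel for the general fact that a category with a zero object and finite biproducts is canonically symmetric monoidal. Your main argument is a more hands-on version of the same content: you transport the structure maps (associator, unitors, swap) from $(R\text{-Mod},\oplus,0)$ along the faithful forgetful functor $U\colon S_R\to R\text{-Mod}$, check that each is a feedback isomorphism because it maps the designated submodules onto each other and strictly intertwines the endomorphisms (zero commutator, so condition (2) of Definition \ref{DefMorSys} holds trivially), and then reduce every coherence diagram to the corresponding one in $R\text{-Mod}$ using faithfulness. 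This buys a self-contained verification that does not lean on the citation, and it makes explicit the one point that genuinely needs checking in this category --- that the canonical module isomorphisms really are morphisms of linear systems --- whereas the paper's route is shorter and makes the result a formal consequence of the preceding lemma on biproducts. Both arguments are sound; yours is longer but more informative about where the system structure enters.
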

\begin{proof}
It is a direct consequence of $\oplus$ being biproduct and that $Z$ is both initial and terminal, see \cite{ML} or \cite{W}) \end{proof}

\begin{remark}$(i)$ It is worth to note that the direct sum in $S_{R}$ descends to the isomorphism classes of linear systems. To be precise, if $\Sigma_{1}\cong\Sigma_{2}$ and $\Gamma_{1}\cong\Gamma_{2}$ then $\Sigma_{1}\oplus\Gamma_{1}\cong\Sigma_{2}\oplus\Gamma_{2}$. $(ii)$ On the other hand note that if $\Sigma_1\oplus\Sigma_2\cong\Sigma_2\oplus\Sigma_1$ by means of ${\tiny \left(\begin{array}{cc}\mathbf{0} & \mathbf{1} \\\mathbf{1} & \mathbf{0}\end{array}\right)}:X_1\oplus X_2\rightarrow X_2\oplus X_1$
\end{remark}

\subsection{Stable classification of locally Brunovsky linear systems and the $K_{0}$ group}

Once we have obtained the symmetric monoidal structure of categories of linear systems and feedback actions we will characterize the stable isomorphism of linear systems in terms of the $0$-th $K$-theory group of the category. The construction of that group \cite{W} is to complete the monoid of isomorphism classes $(S_R)^{\text{iso}}$. But in general $(S_R)^{\text{iso}}$ is not even a set. To avoid this obstruction, the subcategory of locally Brunovsky linear systems is considered because, in this case, the isomorphisms classes form a well defined set, in fact $B_R^{\text{iso}}=\mathbf{P}(R)^{\infty}$ is the set of finite support sequences of finitely generated projective $R$-modules \cite{M} .

Let $\Sigma=(X,f,B)$ be a linear system over the ring $R$. Recall the definition of the invariant modules associated to $\Sigma$ see \cite{M}:
\begin{enumerate}
\item $N_{i}=B+f(N_{i-1})$ for $i\geq 1$ being $N_{0}=0$
\item $M_{i}=X/N_{i}$
\item $I_{i}=ker(M_{i-1}\overset{1}{\rightarrow}M_{i}\rightarrow 0)$ (I-invariants)
\item $Z_{i}=ker(I_{i}\overset{f}{\rightarrow}I_{i+1}\rightarrow 0)$ (Z-invariants)
\end{enumerate}

\begin{lemma}\label{directsuminv}
Let $\Sigma_{1}$ and $\Sigma_{2}$ be linear systems and consider the direct sum $\Sigma_{1}\oplus\Sigma_{2}$. Then
\begin{enumerate}
\item $N_{i}^{\Sigma_{1}\oplus\Sigma_{2}}= N_{i}^{\Sigma_{1}}\oplus N_{i}^{\Sigma_{2}}$
\item $M_{i}^{\Sigma_{1}\oplus\Sigma_{2}}\cong M_{i}^{\Sigma_{1}}\oplus M_{i}^{\Sigma_{2}}$
\item $I_{i}^{\Sigma_{1}\oplus\Sigma_{2}}\cong I_{i}^{\Sigma_{1}}\oplus I_{i}^{\Sigma_{2}}$
\item $Z_{i}^{\Sigma_{1}\oplus\Sigma_{2}}\cong Z_{i}^{\Sigma_{1}}\oplus Z_{i}^{\Sigma_{2}}$
\end{enumerate}
\end{lemma}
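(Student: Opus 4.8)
The plan is to prove the four isomorphisms inductively, in the order they are defined, since each invariant module is built from the previous ones. I would start by establishing claim $(1)$ by induction on $i$. The base case $N_0 = 0 = 0 \oplus 0$ is immediate. For the inductive step, I would unfold the definition: by construction $N_i^{\Sigma_1 \oplus \Sigma_2} = (B_1 \oplus B_2) + (f_1 \oplus f_2)(N_{i-1}^{\Sigma_1 \oplus \Sigma_2})$. Using the inductive hypothesis $N_{i-1}^{\Sigma_1 \oplus \Sigma_2} = N_{i-1}^{\Sigma_1} \oplus N_{i-1}^{\Sigma_2}$ together with the fact that $(f_1 \oplus f_2)$ acts diagonally (in Bass matrix notation it is block-diagonal, so it sends $N_{i-1}^{\Sigma_1} \oplus N_{i-1}^{\Sigma_2}$ to $f_1(N_{i-1}^{\Sigma_1}) \oplus f_2(N_{i-1}^{\Sigma_2})$), I would rearrange the direct sum to obtain $\left(B_1 + f_1(N_{i-1}^{\Sigma_1})\right) \oplus \left(B_2 + f_2(N_{i-1}^{\Sigma_2})\right) = N_i^{\Sigma_1} \oplus N_i^{\Sigma_2}$. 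The key observation making this work is that $N_i^{\Sigma_1}$ and $N_i^{\Sigma_2}$ are genuine submodules of $X_1$ and $X_2$ respectively, so their sum inside $X_1 \oplus X_2$ is literally the direct sum, giving equality on the nose rather than merely an isomorphism.

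For claim $(2)$ I would pass to quotients. Since $N_i^{\Sigma_1 \oplus \Sigma_2} = N_i^{\Sigma_1} \oplus N_i^{\Sigma_2}$ sits inside $X_1 \oplus X_2$ as a direct summand of submodules, the quotient $M_i^{\Sigma_1 \oplus \Sigma_2} = (X_1 \oplus X_2)/(N_i^{\Sigma_1} \oplus N_i^{\Sigma_2})$ is canonically isomorphic to $(X_1/N_i^{\Sigma_1}) \oplus (X_2/N_i^{\Sigma_2}) = M_i^{\Sigma_1} \oplus M_i^{\Sigma_2}$, by the standard fact that quotients commute with finite direct sums. Here the result is only an isomorphism (not equality), which is why the statement uses $\cong$ from item $(2)$ onward.

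Claims $(3)$ and $(4)$ I would handle together, since both $I_i$ and $Z_i$ are kernels of maps between the modules already shown to split as direct sums. For $(3)$, the map $M_{i-1} \to M_i$ is induced by the identity on $X$ (the natural surjection coming from $N_{i-1} \subset N_i$), and under the isomorphisms of $(2)$ this map is the direct sum of the two component maps $M_{i-1}^{\Sigma_j} \to M_i^{\Sigma_j}$. Since taking kernels commutes with finite direct sums of morphisms, $I_i^{\Sigma_1 \oplus \Sigma_2} \cong I_i^{\Sigma_1} \oplus I_i^{\Sigma_2}$. For $(4)$, I would argue identically, observing that the map $I_i \to I_{i+1}$ induced by $f$ is, under the isomorphisms of $(3)$, the direct sum of the corresponding maps for $\Sigma_1$ and $\Sigma_2$, because $f = f_1 \oplus f_2$ is block-diagonal and hence respects the splitting at every stage; the kernel therefore splits as $Z_i^{\Sigma_1} \oplus Z_i^{\Sigma_2}$.

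The only genuinely delicate point, and the step I would be most careful about, is verifying the naturality claim that underlies $(3)$ and $(4)$: one must check that the canonical isomorphisms of $(2)$ and $(3)$ are \emph{compatible} with the structure maps $M_{i-1} \to M_i$ and $I_i \to I_{i+1}$, so that these maps really do become direct sums of morphisms under the identifications. This amounts to checking that a pair of squares commutes, which follows because every map in sight is induced either by the block-diagonal $f_1 \oplus f_2$ or by the natural projections, and both are compatible with the direct-sum decomposition of $X_1 \oplus X_2$. Once this compatibility is in place, the rest is the routine fact that the kernel functor preserves finite biproducts in the category of $R$-modules.
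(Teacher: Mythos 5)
Your proposal is correct and follows essentially the same route as the paper: part $(1)$ by induction using the block-diagonal form of $f_1\oplus f_2$, and parts $(2)$--$(4)$ by exhibiting the canonical comparison maps, checking they are compatible with the structure maps $M_{i-1}\to M_i$ and $I_i\to I_{i+1}$ (the commuting squares you flag), and concluding that the quotients and kernels split. The paper packages the last step as three applications of the short five lemma to the defining exact sequences, which is just a formalization of your ``quotients and kernels commute with finite biproducts'' argument.
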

\begin{proof}
Let's denote by columns the elements of $X=X_1\oplus X_2$. Hence homomorphism $f_1\oplus f_2$ is, in Bass' notation {\tiny$\left(\begin{array}{cc}f_1 & \textbf{0} \\\textbf{0} & f_2\end{array}\right)$}.

$(1)$ Is clear because $N_0^{\Sigma_1\oplus\Sigma_2}=(0,0)$, $N_1^{\Sigma_1\oplus\Sigma_2}=B_1\oplus B_2=N_{1}^{\Sigma_{1}}\oplus N_{2}^{\Sigma_{2}}$, and one obtains sequently $N_i^{\Sigma_1\oplus\Sigma_2}=(B_1\oplus B_2)+ {\tiny \left(\begin{array}{cc}f_1 & \textbf{0} \\ \textbf{0} & f_2\end{array}\right)}(N_{i-1}^{\Sigma_1}\oplus N_{i-1}^{\Sigma_2})=N_{i}^{\Sigma_{1}}\oplus N_{i}^{\Sigma_{2}}$.

$(2)$ Since $M_i^{\Sigma_1\oplus\Sigma_2}= X/N_i^{\Sigma_1\oplus\Sigma_2}$, its elements are the classes $(x_{1},x_{2})+N_i^{\Sigma_1\oplus\Sigma_2}$. Consider the (well defined) linear map $\mu(x_1+N_i^{\Sigma_1},x_2+N_i^{\Sigma_2})=
(x_1,x_2)+N_i^{\Sigma_1\oplus\Sigma_2}$. Since  $\pi(x_{1},x_{2})=(x_{1},x_{2})+
N_i^{\Sigma_1\oplus\Sigma_2}$, then the result follows from application of short-five-lemma on the following commutative diagram with exact rows

\vspace{0.5cm}
$$
\begin{xy}
(30,0)*+{0}="11";
(60,0)*+{N_i^{\Sigma_1\oplus\Sigma_2}}="12";
(90,0)*+{X}="13";
(120,0)*+{M_i^{\Sigma_1\oplus\Sigma_2}}="14";
(150,0)*+{0}="15";
(30,20)*+{0}="21";
(60,20)*+{N_i^{\Sigma_1}\oplus N_i^{\Sigma_2}}="22";
(90,20)*+{X_1\oplus X_2}="23";
(120,20)*+{M_i^{\Sigma_1}\oplus M_i^{\Sigma_2}}="24";
(150,20)*+{0}="25";
(105,10)*+{\circlearrowleft}="a";
(75,10)*+{\circlearrowleft}="b";
{\ar@{->}^{} "21";"22"};
{\ar@{->}^{{\tiny\left(
\begin{array}{cc}
i_{1} & \textbf{0}\\
\textbf{0} & i_{2}
\end{array}
\right)}} "22";"23"};
{\ar@{->}^{{\tiny\left(
\begin{array}{cc}
\pi_{1} & \textbf{0}\\
\textbf{0} & \pi_{2}
\end{array}
\right)}} "23";"24"};
{\ar@{->}^{} "24";"25"};
{\ar@{->}^{} "11";"12"};
{\ar@{->}^{i} "12";"13"};
{\ar@{->}^{\pi} "13";"14"};
{\ar@{->}^{} "14";"15"};
{\ar@{=}^{} "12";"22"};
{\ar@{=}^{} "13";"23"};
{\ar@{->}^{\mu} "24";"14"};
\end{xy}
$$
\vspace{0.5cm}

$(3)$ Let $\mu$ be the linear maps defined in $(2)$. It is clear that the following square is commutative
$$
\xymatrix{
0\ar[r] & I_{i}^{\Sigma_{1}}\oplus I_{i}^{\Sigma_{2}}\ar[r]^{{\tiny\left(
\begin{array}{cc}
i_{1} & \textbf{0}\\
\textbf{0} & i_{2}
\end{array}
\right)}}  & M_{i-1}^{\Sigma_{1}}\oplus M_{i-1}^{\Sigma_{2}}\ar[r]^{{\tiny\left(
\begin{array}{cc}
1 & \textbf{0}\\
\textbf{0} & 1
\end{array}
\right)}}\ar[d]^{\mu}\ar@{}[rd]|{\circlearrowleft} & M_{i}^{\Sigma_{1}}\oplus M_{i}^{\Sigma_{2}} 
\ar[d]^{\mu}\ar[r] & 0 \\
0\ar[r] & I_{i}^{\Sigma_{1}\oplus\Sigma_{2}}\ar[r]^{i}  & M_{i-1}^{\Sigma_{1}\oplus\Sigma_{2}}\ar[r]^{1} & M_{i}^{\Sigma_{1}\oplus\Sigma_{2}}\ar[r] & 0
}
$$
Since $1\circ \mu \circ{\tiny\left(
\begin{array}{cc}
i_{1} & \textbf{0}\\
\textbf{0} & i_{2}
\end{array}
\right)}=\mu\circ{\tiny\left(
\begin{array}{cc}
1 & \textbf{0}\\
\textbf{0} & 1
\end{array}
\right)}\circ {\tiny\left(
\begin{array}{cc}
i_{1} & \textbf{0}\\
\textbf{0} & i_{2}
\end{array}
\right)}=0$ we deduce that the image of $\mu\circ {\tiny\left(
\begin{array}{cc}
i_{1} & \textbf{0}\\
\textbf{0} & i_{2}
\end{array}
\right)}$ lies into $I_{i}^{\Sigma_{1}\oplus\Sigma_{2}}$. Define $\nu$ as the restriction of $\mu$ to $I_{i}^{\Sigma_{1}}\oplus I_{i}^{\Sigma_{2}}$ ($\nu(x_1+N_{i-1}^{\Sigma_1},x_2+N_{i-1}^{\Sigma_2})=
\mu(x_1+N_{i-1}^{\Sigma_1},x_2+N_{i-1}^{\Sigma_2})$), then the result follows from application of short-five-lemma on the following commutative diagram with exact rows
\vspace{0.5cm}
$$
\begin{xy}
(30,0)*+{0}="11";
(60,0)*+{I_i^{\Sigma_1\oplus\Sigma_2}}="12";
(90,0)*+{M_{i-1}^{\Sigma_1\oplus\Sigma_2}}="13";
(120,0)*+{M_i^{\Sigma_1\oplus\Sigma_2}}="14";
(150,0)*+{0}="15";
(30,20)*+{0}="21";
(60,20)*+{I_i^{\Sigma_1}\oplus I_i^{\Sigma_2}}="22";
(90,20)*+{M_{i-1}^{\Sigma_1}\oplus M_{i-1}^{\Sigma_2}}="23";
(120,20)*+{M_i^{\Sigma_1}\oplus M_i^{\Sigma_2}}="24";
(150,20)*+{0}="25";
(105,10)*+{\circlearrowleft}="a";
(75,10)*+{\circlearrowleft}="b";
{\ar@{->}^{} "21";"22"};
{\ar@{->}^{{\tiny\left(
\begin{array}{cc}
i_{1} & \textbf{0}\\
\textbf{0} & i_{2}
\end{array}
\right)}} "22";"23"};
{\ar@{->}^{{\tiny\left(
\begin{array}{cc}
1 & \textbf{0}\\
\textbf{0} & 1
\end{array}
\right)}} "23";"24"};
{\ar@{->}^{} "24";"25"};
{\ar@{->}^{} "11";"12"};
{\ar@{->}^{i} "12";"13"};
{\ar@{->}^{1} "13";"14"};
{\ar@{->}^{} "14";"15"};
{\ar@{->}^{\nu} "22";"12"};
{\ar@{->}^{\mu} "23";"13"};
{\ar@{->}^{\mu} "24";"14"};
\end{xy}
$$
\vspace{0.5cm}

$(4)$ As above, ${\tiny\left(
\begin{array}{cc}
f_{1} & \textbf{0}\\
\textbf{0} & f_{2}
\end{array}
\right)}\circ\nu\circ{\tiny\left(
\begin{array}{cc}
i_{1} & \textbf{0}\\
\textbf{0} & i_{2}
\end{array}
\right)}=\nu\circ{\tiny\left(
\begin{array}{cc}
f_{1} & \textbf{0}\\
\textbf{0} & f_{2}
\end{array}
\right)}\circ{\tiny\left(
\begin{array}{cc}
i_{1} & \textbf{0}\\
\textbf{0} & i_{2}
\end{array}
\right)}=0$ so $\nu\circ{\tiny\left(
\begin{array}{cc}
i_{1} & \textbf{0}\\
\textbf{0} & i_{2}
\end{array}
\right)}$ lies into $Z_{i}^{\Sigma_{1}\oplus\Sigma_{2}}$. Defining $\rho$ as the restriction of $\nu$ to $Z_{i}^{\Sigma_{1}}\oplus Z_{i}^{\Sigma_{2}}$ ($\rho(x_1+N_{i-1}^{\Sigma_1},x_2+N_{i-1}^{\Sigma_2}) =\nu(x_1+N_{i-1}^{\Sigma_1},x_2+N_{i-1}^{\Sigma_2})$) we see that the following diagram is commutative

\vspace{0.5cm}
$$
\begin{xy}
(30,0)*+{0}="11";
(60,0)*+{Z_i^{\Sigma_1\oplus\Sigma_2}}="12";
(90,0)*+{I_{i}^{\Sigma_1\oplus\Sigma_2}}="13";
(120,0)*+{I_{i+1}^{\Sigma_1\oplus\Sigma_2}}="14";
(150,0)*+{0}="15";
(30,20)*+{0}="21";
(60,20)*+{Z_i^{\Sigma_1}\oplus Z_i^{\Sigma_2}}="22";
(90,20)*+{I_{i}^{\Sigma_1}\oplus I_{i}^{\Sigma_2}}="23";
(120,20)*+{I_{i+1}^{\Sigma_1}\oplus I_{i+1}^{\Sigma_2}}="24";
(150,20)*+{0}="25";
(105,10)*+{\circlearrowleft}="a";
(75,10)*+{\circlearrowleft}="b";
{\ar@{->}^{} "21";"22"};
{\ar@{->}^{{\tiny\left(
\begin{array}{cc}
i_{1} & 0\\
0 & i_{2}
\end{array}
\right)}} "22";"23"};
{\ar@{->}^{{\tiny\left(
\begin{array}{cc}
f_{1} & 0\\
0 & f_{2}
\end{array}
\right)}} "23";"24"};
{\ar@{->}^{} "24";"25"};
{\ar@{->}^{} "11";"12"};
{\ar@{->}^{i} "12";"13"};
{\ar@{->}^{{\tiny\left(
\begin{array}{cc}
f_{1} & \textbf{0}\\
\textbf{0} & f_{2}
\end{array}
\right)}} "13";"14"};
{\ar@{->}^{} "14";"15"};
{\ar@{->}^{\rho} "22";"12"};
{\ar@{->}^{\nu} "23";"13"};
{\ar@{->}^{\nu} "24";"14"};
\end{xy}
$$
\vspace{0.5cm}

Then the result follows from application of short-five-lemma on the above diagram with exact rows. 
\end{proof}

A linear system $\Sigma$ is reachable if $N_{s}=X$ or, equivalently, if $M_{s}=0$ (see \cite{HS} and \cite{M}).
$\Sigma$ is a locally Brunovsky linear system if the state space is finitely generated and the invariant modules are projective $R$-modules. Show that, in particular, locally Brunovsky linear systems are reachable (see \cite{M}).

Let $B_{R}$ the subcategory of $S_{R}$ whose objects are the locally Brunovsky linear systems and whose homomorphisms are the homomorphisms of linear systems. Since direct sum of finitelly generated projectives is again projective it follows that direct sum of locally Brunovsky linear systems is again a locally Brunovsky linear system.

In fact, $B_{R}$ is a symmetric monoidal subcategory of $S_{R}$. Moreover, since the isomorphism classes of locally Brunovsky linear systems, $B_{R}^{iso}$, is a set (see \cite{M}) we have

\begin{lemma}
The triple $(B_{R}^{iso},\oplus,0)$ is a commutative monoid.
\end{lemma}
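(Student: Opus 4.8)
The plan is to show that the commutative-monoid axioms for $(B_R^{iso},\oplus,0)$ are simply the shadow, on isomorphism classes, of the symmetric monoidal structure on the category $B_R$ that has already been established. So the proof will consist mostly of transporting facts about objects and morphisms of $B_R$ down to the quotient $B_R^{iso}$, after first checking that $\oplus$ really does induce a well-defined operation there.

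First I would record that $\oplus$ descends to a binary operation on $B_R^{iso}$. By Remark~(i) we have that $\Sigma_1\cong\Sigma_2$ and $\Gamma_1\cong\Gamma_2$ imply $\Sigma_1\oplus\Gamma_1\cong\Sigma_2\oplus\Gamma_2$, so sending a pair of classes $([\Sigma],[\Gamma])$ to $[\Sigma\oplus\Gamma]$ is independent of the chosen representatives. I would also note closure: the direct sum of two locally Brunovsky systems is again locally Brunovsky, since by Lemma~\ref{directsuminv} the invariant modules of $\Sigma_1\oplus\Sigma_2$ are the direct sums of those of $\Sigma_1$ and $\Sigma_2$, and finite direct sums of finitely generated projective modules are again finitely generated and projective. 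Hence $\oplus$ maps $B_R^{iso}\times B_R^{iso}$ into $B_R^{iso}$.

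Then I would verify the three monoid axioms at the level of classes. Associativity follows because the canonical $R$-module isomorphism $(X_1\oplus X_2)\oplus X_3\cong X_1\oplus(X_2\oplus X_3)$ intertwines the endomorphisms and carries the input submodule to the input submodule, hence is a feedback isomorphism $(\Sigma_1\oplus\Sigma_2)\oplus\Sigma_3\cong\Sigma_1\oplus(\Sigma_2\oplus\Sigma_3)$; passing to classes gives associativity of $\oplus$ on $B_R^{iso}$. For the identity, the zero system $Z=(0,0,0)$ lies in $B_R$ and is the unit object of the monoidal structure, so $\Sigma\oplus Z\cong\Sigma\cong Z\oplus\Sigma$ and $[Z]=0$ is a two-sided identity. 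Commutativity is exactly Remark~(ii): the swap map on state spaces is a feedback isomorphism $\Sigma_1\oplus\Sigma_2\cong\Sigma_2\oplus\Sigma_1$, so $[\Sigma_1]\oplus[\Sigma_2]=[\Sigma_2]\oplus[\Sigma_1]$.

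There is no serious obstacle here: every isomorphism needed is the evident structural isomorphism of direct sums, already packaged in the symmetric monoidal theorem, and the only genuinely necessary input beyond formal bookkeeping is that $B_R^{iso}$ is a set (cited from~\cite{M}), which is what guarantees that $(B_R^{iso},\oplus,0)$ is an honest algebraic structure rather than a large quotient. The one point worth stating carefully is closure, that is, that $\oplus$ does not leave the subcategory $B_R$, and this is precisely where Lemma~\ref{directsuminv} together with stability of projectivity under direct sums is used.
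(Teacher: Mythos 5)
Your proposal is correct and follows essentially the same route as the paper: well-definedness of $\oplus$ on classes via the remark that direct sums descend to isomorphism classes, closure via stability of finitely generated projectives under direct sums (which the paper states just before the lemma), and associativity, identity and commutativity inherited from the symmetric monoidal structure of $B_R$. Your version is merely somewhat more explicit about each axiom than the paper's brief argument.
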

\begin{proof}
 Since the direct sum descends to the isomorphisms classes of linear systems, $\oplus$ is well defined in $B_{R}^{iso}$ (that is, $\oplus$ is a closed binary operation in $B_{R}^{iso}$). The identity element is the class of the zero linear system $Z=(0,0,0)$ and the associativity and commutativity properties follows from the associativity and commutativity of $\oplus$ in the symmetric monoidal category $B_{R}$.
\end{proof}

Being the set of isomorphism classes of locally Brunovsky linear systems is conmutative monoid, it is natural to ask for the relationship between its Grothendieck (completion) group and the theory of linear systems. The next Theorem shows the close link between the Grothendieck group of the monoid $B_{R}^{iso}$ and the set of stable equivalence classes of locally Brunovsky linear systems over $R$.

\begin{theorem}
Let us denote $K_{0}(B_{R})$ the Grothendieck group of the monoid $B_{R}^{iso}$ and $\gamma:B_{R}^{iso}\rightarrow K_{0}(B_{R})$ the natural homomorphism of monoids. Then $\gamma([\Sigma])=\gamma([\Gamma])$ if and only if $\Sigma\overset{s.i.}{\simeq}\Gamma$.
\end{theorem}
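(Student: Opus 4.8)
The plan is to reduce the statement to the standard description of the group completion of a commutative monoid, and then to transport the resulting monoid equation back to linear systems via Theorem \ref{Risomor} together with the additivity of the invariant modules (Lemma \ref{directsuminv}). Recall the elementary fact that for a commutative monoid $(M,\oplus,0)$ with group completion $\gamma\colon M\to K_0(M)$ one has, for $a,b\in M$, that $\gamma(a)=\gamma(b)$ if and only if there is some $c\in M$ with $a\oplus c=b\oplus c$ (see \cite{W}). Applying this to $M=B_R^{iso}$, $a=[\Sigma]$ and $b=[\Gamma]$, the equality $\gamma([\Sigma])=\gamma([\Gamma])$ is equivalent to the existence of a locally Brunovsky system $\Delta$ with $[\Sigma]\oplus[\Delta]=[\Gamma]\oplus[\Delta]$ in $B_R^{iso}$; since $\oplus$ descends to isomorphism classes, this means $\Sigma\oplus\Delta\cong\Gamma\oplus\Delta$ in $B_R$, and by Theorem \ref{Risomor} such an isomorphism is exactly a feedback isomorphism.

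For the forward implication this is immediate: a locally Brunovsky $\Delta$ is in particular a linear system, so the statement that $\Sigma\oplus\Delta$ is feedback isomorphic to $\Gamma\oplus\Delta$ is precisely the assertion $\Sigma\overset{s.i.}{\simeq}\Gamma$, with $\Delta$ playing the role of the stabilising system in the definition of stable feedback isomorphism. Thus $\gamma([\Sigma])=\gamma([\Gamma])$ yields $\Sigma\overset{s.i.}{\simeq}\Gamma$ directly.

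The reverse implication is the delicate one. By the definition of s.i. we are handed only \emph{some} linear system $\Delta$, a priori not locally Brunovsky, with $\Sigma\oplus\Delta\cong\Gamma\oplus\Delta$, and to feed this into the monoid description above I must cancel $\Delta$ at the level of $K_0$. The route is to pass to invariants: since the invariant modules $Z_i$ are feedback invariants \cite{M} and are additive for $\oplus$ by Lemma \ref{directsuminv}, the feedback isomorphism gives $Z_i^{\Sigma}\oplus Z_i^{\Delta}\cong Z_i^{\Gamma}\oplus Z_i^{\Delta}$ for every $i$, with $Z_i^{\Sigma}$ and $Z_i^{\Gamma}$ finitely generated projective. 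Using the classification $B_R^{iso}\cong\mathbf P(R)^{\infty}$ of \cite{M}, so that $\gamma([\Sigma])=([Z_i^{\Sigma}])_i$ in $K_0(B_R)\cong\bigoplus_i K_0(R)$, it then suffices to cancel $[Z_i^{\Delta}]$ in each copy of $K_0(R)$ to obtain $[Z_i^{\Sigma}]=[Z_i^{\Gamma}]$ for all $i$, hence $\gamma([\Sigma])=\gamma([\Gamma])$.

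The main obstacle is exactly this cancellation, and it is where the finiteness built into $B_R$ is essential. If $Z_i^{\Delta}$ is finitely generated projective — in particular if the stabilising $\Delta$ may be taken locally Brunovsky — then $[Z_i^{\Delta}]$ is a genuine element of the group $K_0(R)$ and cancellation is automatic. For a wholly unrestricted stabiliser the implication can fail: an Eilenberg--swindle choice such as $\Delta=(R^{(\infty)},0,R^{(\infty)})$ absorbs any summand, so $\Sigma\oplus\Delta\cong\Delta\cong\Gamma\oplus\Delta$ would hold even when $\gamma([\Sigma])\neq\gamma([\Gamma])$. The key step of the reverse direction is therefore to argue that, for locally Brunovsky $\Sigma$ and $\Gamma$, the stabilising system in the definition of s.i. can be chosen within $B_R$ (equivalently, with finitely generated projective invariants); once this reduction is in place, the cancellation in $K_0(R)$ closes the argument.
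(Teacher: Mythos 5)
Your forward implication is exactly the paper's argument: unwind the standard description of the group completion of a commutative monoid to produce a class $[\Delta]\in B_R^{iso}$ with $[\Sigma]\oplus[\Delta]=[\Gamma]\oplus[\Delta]$, and read the resulting isomorphism $\Sigma\oplus\Delta\cong\Gamma\oplus\Delta$ as a feedback isomorphism via Theorem \ref{Risomor}. That half is complete and correct.

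The reverse implication is where your proposal stops short of being a proof. You correctly isolate the obstruction --- the definition of s.i.\ hands you an arbitrary stabilising system $\Delta$, not a locally Brunovsky one --- but you then only \emph{assert} that ``the stabilising system can be chosen within $B_R$'' and declare that the cancellation in $K_0(R)$ then closes the argument. That reduction is the entire content of the direction and is never established; indeed your own swindle observation shows it cannot be established for the definition as literally written, since $\Delta=\bigoplus_{\mathbb{N}}(\Sigma\oplus\Gamma)$ is a legitimate linear system (the paper places no finiteness hypothesis on state modules) and makes \emph{any} two systems stably isomorphic. (Your specific choice $(R^{(\infty)},0,R^{(\infty)})$ only absorbs summands of the form $\Gamma(p)$, but the corrected swindle above makes the point.) So as written the ``only if'' direction is incomplete, and in fact false under the unrestricted reading of s.i. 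For comparison, the paper's own proof silently proves only the forward implication and never addresses the converse, so your diagnosis exposes a genuine issue with the statement rather than a missing trick: either s.i.\ must implicitly restrict the stabiliser to $B_R$ (in which case the converse is immediate from the same monoid identity you quote in your first paragraph, with no need for the $Z$-invariant detour or for cancellation in $K_0(R)$), or the theorem requires the reduction you flag, which neither you nor the paper supplies. State explicitly which reading you adopt; under the restricted one your opening paragraph already proves both directions.
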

\begin{proof}
The Grothendieck group of $B_{R}^{iso}$ is 
$K_{0}(B_{R})=(B_{R}^{iso}\times B_{R}^{iso})/\sim$
being the equivalence relation $\sim$ as follows 
\begin{equation}
\begin{aligned}
&([\Sigma_{1}],[\Gamma_{1}])\sim([\Sigma_{2}],[\Gamma_{2}])\Leftrightarrow \exists [U]\in B_{R}^{iso} \\
&\text{such that } [\Sigma_{1}]+[\Gamma_{2}]+[U]=[\Sigma_{2}]+[\Gamma_{1}]+[U]
\end{aligned}
\end{equation}
We will denote $<\Sigma,\Gamma>$ the class of $([\Sigma],[\Gamma])$ in $K_{0}(B_{R})$. Then the natural homomorphism of monoids $\gamma$ is defined by $\gamma([\Sigma])=<\Sigma,0>$.

Now suppose that $\gamma([\Sigma_{1}])=\gamma([\Sigma_{2}])$, then $<\Sigma_{1},0>=<\Sigma_{2},0>$ and by definition there exist a linear system $U$ such that $[\Sigma_{1}]+[U]=[\Sigma_{2}]+[U]$. But the equalities in $B_{R}^{iso}$ are the feedback isomorphisms in the category $B_{R}$, so
\begin{equation*}
\Sigma_{1}\oplus U\overset{f.i.}{\simeq}\Sigma_{2}\oplus U
\end{equation*}
and we deduce that $\Sigma_{1}\overset{s.i.}{\simeq}\Sigma_{2}$.
\end{proof}

\begin{corollary}
The conmutative sub-monoid $Im(\gamma)\subset K_{0}(B_{R})$ is precisely the stable equivalence classes of locally Brunovsky linear systems over $R$. 
\end{corollary}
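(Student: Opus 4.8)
The plan is to read this off directly from the preceding Theorem, which identifies the fibers of $\gamma$ with the stable equivalence classes, so that the corollary becomes the elementary observation that a surjection onto its image induces a bijection on equivalence classes. First I would record that $Im(\gamma)$ is indeed a commutative sub-monoid of $K_0(B_R)$. Since $\gamma:B_R^{iso}\to K_0(B_R)$ is a homomorphism of monoids, its image contains the neutral element $\gamma([0])=\langle 0,0\rangle$ and is closed under the operation of $K_0(B_R)$, because $\gamma([\Sigma])+\gamma([\Gamma])=\gamma([\Sigma]+[\Gamma])=\gamma([\Sigma\oplus\Gamma])$ again lies in $Im(\gamma)$; commutativity is inherited from $K_0(B_R)$.

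The substance is the identification, as sets, of $Im(\gamma)$ with the stable equivalence classes. Since feedback isomorphic systems are stably isomorphic, the relation $\overset{s.i.}{\simeq}$ descends to $B_R^{iso}$; let $\mathcal{S}$ denote the resulting set of stable classes and write $\overline{[\Sigma]}$ for the class of $\Sigma$. I would factor $\gamma$ as
$$
B_R^{iso}\twoheadrightarrow \mathcal{S}\overset{\bar\gamma}{\longrightarrow} Im(\gamma),\qquad \bar\gamma(\overline{[\Sigma]})=\gamma([\Sigma]),
$$
and first check that $\bar\gamma$ is well defined: if $\Sigma\overset{s.i.}{\simeq}\Gamma$ then the Theorem gives $\gamma([\Sigma])=\gamma([\Gamma])$, so the value is independent of the chosen representative.

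It then remains to show $\bar\gamma$ is a bijection. Surjectivity is immediate, since every element of $Im(\gamma)$ has the form $\gamma([\Sigma])=\bar\gamma(\overline{[\Sigma]})$. For injectivity, suppose $\bar\gamma(\overline{[\Sigma]})=\bar\gamma(\overline{[\Gamma]})$, i.e. $\gamma([\Sigma])=\gamma([\Gamma])$; the Theorem then yields $\Sigma\overset{s.i.}{\simeq}\Gamma$, whence $\overline{[\Sigma]}=\overline{[\Gamma]}$. Thus $\bar\gamma$ is a bijection between $\mathcal{S}$ and $Im(\gamma)$, and since $\gamma$ is additive it is in fact an isomorphism of monoids, which is the assertion.

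No genuine obstacle arises here: all the work has been front-loaded into the preceding Theorem, and the only points needing a line of verification are the closure of $Im(\gamma)$ under the monoid operation (from additivity of $\gamma$ together with $[\Sigma]+[\Gamma]=[\Sigma\oplus\Gamma]$ in $B_R^{iso}$) and the well-definedness of $\bar\gamma$ on stable classes.
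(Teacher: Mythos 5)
Your argument is correct and coincides with the paper's (implicit) reasoning: the paper states this corollary without proof as an immediate consequence of the preceding theorem, which identifies the fibers of $\gamma$ with the stable equivalence classes, and your factorization of $\gamma$ through the set of stable classes is exactly the standard way to make that deduction explicit. The only point you assert beyond the corollary's literal claim is that $\bar\gamma$ is a monoid isomorphism, which additionally requires that $\oplus$ descends to stable classes (true, by summing the ancillary systems), but the set-level bijection you establish is all the statement demands.
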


\section{The $K_{0}$ group of locally Brunovsky linear systems}

It have been proved in \cite{M} that there exist a bijective correspondence between the feedback isomorphism classes of locally Brunovsky linear systems over the ring $R$ and the set, $\mathbb{P}(R)^{\infty}$, of finite support sequences with entries in $\mathbb{P}(R)$. This correspondence is given by the map of the Z-invariants
\begin{equation}
\begin{aligned}
B_{R}^{iso}&\overset{\mathcal{Z}}{\rightarrow} \mathbb{P}(R)^{\infty} \\
[\Sigma]&\mapsto (Z_{1}^{\Sigma},Z_{2}^{\Sigma},\hdots,Z_{s}^{\Sigma},0,0,\hdots)
\end{aligned}
\end{equation}

Observe that $\mathbb{P}(R)^{\infty}$ have a monoid structure given by the direct sum of sequences and that, as a monoid, is isomorphic to $\bigoplus_{\mathbb{N}}\mathbb{P}(R)$. Then, from Lemma \ref{directsuminv}, follows that $\mathcal{Z}$ is an isomorphism of monoids. This allow us to give a precise description of the monoid of stable equivalence classes of locally Brunovsky linear systems.

\begin{theorem}\label{mainteo2} Let $\mathbb{P}(R)$ be the monoid of isomorphisms classes of projective finitely generated $R$-modules and $K_{0}(R)$ the Grothendieck group of the monoid $\mathbb{P}(R)$. Then
\begin{enumerate}
\item $B_{R}^{iso}\simeq\bigoplus_{\mathbb{N}}\mathbb{P}(R)$
\item $K_{0}(B_{R})\simeq\bigoplus_{\mathbb{N}}K_{0}(R)$
\end{enumerate}
\end{theorem}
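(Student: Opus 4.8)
The plan is to reduce the whole statement to two inputs: the monoid isomorphism $\mathcal{Z}$ recalled immediately before the theorem, and the fact that the Grothendieck completion commutes with arbitrary direct sums of commutative monoids. Part (1) is essentially a packaging of the discussion preceding the statement, while part (2) is a formal consequence of part (1) together with a single categorical principle.

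For part (1), I would argue that $\mathcal{Z}\colon B_R^{iso}\to\mathbb{P}(R)^{\infty}$ is an \emph{isomorphism of monoids}. That it is a bijection of sets is the feedback classification result of \cite{M} quoted above. That it is a homomorphism is exactly item (4) of Lemma \ref{directsuminv}: from $Z_i^{\Sigma_1\oplus\Sigma_2}\cong Z_i^{\Sigma_1}\oplus Z_i^{\Sigma_2}$ for every $i$ one gets, componentwise,
$$\mathcal{Z}([\Sigma_1]+[\Sigma_2])=\bigl(Z_i^{\Sigma_1\oplus\Sigma_2}\bigr)_i=\bigl(Z_i^{\Sigma_1}\oplus Z_i^{\Sigma_2}\bigr)_i=\mathcal{Z}([\Sigma_1])+\mathcal{Z}([\Sigma_2]),$$
the sum on $\mathbb{P}(R)^{\infty}$ being the coordinatewise direct sum. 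A bijective monoid homomorphism is a monoid isomorphism, and since the finite-support sequences with entries in $\mathbb{P}(R)$ are by definition the coproduct $\bigoplus_{\mathbb{N}}\mathbb{P}(R)$ in the category of commutative monoids, this yields $B_R^{iso}\simeq\bigoplus_{\mathbb{N}}\mathbb{P}(R)$.

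For part (2), I would invoke the universal property of the Grothendieck group: the completion functor $K_0(-)$ is the left adjoint of the forgetful functor from abelian groups to commutative monoids, hence preserves all colimits, and in particular coproducts. Since the coproduct is the direct sum in both categories, part (1) gives
$$K_0(B_R)=K_0(B_R^{iso})\simeq K_0\!\left(\bigoplus_{\mathbb{N}}\mathbb{P}(R)\right)\simeq\bigoplus_{\mathbb{N}}K_0(\mathbb{P}(R))=\bigoplus_{\mathbb{N}}K_0(R),$$
where the last equality is just the definition of $K_0(R)$ as the completion of $\mathbb{P}(R)$. If one prefers to avoid adjunction language, the same conclusion follows directly from Yoneda's lemma: for every abelian group $A$,
\begin{align*}
\Hom_{\text{Ab}}\!\left(K_0\!\left(\textstyle\bigoplus_{\mathbb{N}}\mathbb{P}(R)\right),A\right)
&\cong\Hom_{\text{Mon}}\!\left(\textstyle\bigoplus_{\mathbb{N}}\mathbb{P}(R),A\right)\\
&\cong\textstyle\prod_{\mathbb{N}}\Hom_{\text{Mon}}(\mathbb{P}(R),A)\\
&\cong\textstyle\prod_{\mathbb{N}}\Hom_{\text{Ab}}(K_0(R),A)\\
&\cong\Hom_{\text{Ab}}\!\left(\textstyle\bigoplus_{\mathbb{N}}K_0(R),A\right),
\end{align*}
so the two groups represent the same functor and are therefore isomorphic.

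The only genuinely delicate point is the commutation of $K_0$ with the \emph{infinite} direct sum; everything else is bookkeeping already done in Section 4 and in Lemma \ref{directsuminv}. I would be careful to take the coproduct in the category of commutative monoids, where it is the direct sum $\bigoplus$ and not the direct product, so that both the adjunction argument and the $\Hom$-computation above apply verbatim to the infinite index set $\mathbb{N}$; with that caveat the argument is uniform and requires no separate treatment of the infinite case.
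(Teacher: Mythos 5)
Your proposal is correct and follows essentially the same route as the paper: part (1) combines the bijection from \cite{M} with Lemma \ref{directsuminv}, and part (2) rests on the universal properties of the Grothendieck completion and of the direct sum of commutative monoids, which is exactly what the paper verifies by hand with its explicit diagrams. Your adjunction/Yoneda phrasing is just a more compact packaging of that same universal-property argument, so no further comparison is needed.
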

\begin{proof}
\begin{enumerate}
\item It is clear combining the biyection given by the $Z$-map defined in \cite{M} and the Lemma \ref{directsuminv}.

\item Recall that if $M$ is a monoid then $K_{0}(M)$ has the universal property (see \cite{R}): $K_{0}(M)$ is the unique abelian group (up to isomorphisms) such that for any other abelian group $G$ and any monoid homomorphism $g:M\rightarrow G$ there exist a unique group homomorphism $f: K_{0}(M)\rightarrow G$ such that the diagram is commutative
$$
\xymatrix{
 M\ar[rr]^{g}\ar[d]^{\gamma} &  & G           \\
K_{0}(M)\ar@{-->}[urr]^{f}  &  &
}
$$
being $\gamma$ the completion homomorphism $\gamma:M\rightarrow K_{0}(M)$.

Let $\gamma:\mathbb{P}(R)\rightarrow K_{0}(R)$ be the completion homomorphism and consider the induced monoid homomorphism $\overline{\gamma}:\bigoplus_{\mathbb{N}}\mathbb{P}(R)\rightarrow \bigoplus_{\mathbb{N}}K_{0}(R)$. Consider an abelian group and a homomorphism of monoids $\bigoplus_{\mathbb{N}}\mathbb{P}(R)\rightarrow G$. Then, by the universal property of the direct sum, there is a family of monoid homomorphisms $g_{i}:\mathbb{P}(R)\rightarrow G$ such that $g=\oplus g_{i}$. Because of the universal property of $K_{0}(R)$ there exist, for each $g_{i}$, a unique group homomorphism, 
$f_{i}: K_{0}(R)\rightarrow G$, making the following diagram commutative
$$
\xymatrix{
\mathbb{P}(R) \ar[rr]^{g_{i}}\ar[d]^{\gamma} &  & G           \\
K_{0}(R)\ar@{-->}[urr]^{f_{i}}  &  &
}
$$
Now, it is clear that the following diagram is commutative
$$
\xymatrix{
\bigoplus_{\mathbb{N}}\mathbb{P}(R) \ar[rr]^{g}\ar[d]^{\overline{\gamma}} &  & G           \\
\bigoplus_{\mathbb{N}}K_{0}(R)\ar@{-->}[urr]^{\oplus f_{i}}  &  &
}
$$
wich proves the existence. If there is another group homomorphism $f':\bigoplus_{\mathbb{N}}K_{0}(R)\rightarrow G$ making the diagram commutative then, by the universal property of the direct sum and Grothendieck group, it must be equal to $f$.

\end{enumerate}
\end{proof}

Theorem \ref{mainteo2} reduces the problem of study the monoid of stable isomorphism classes of locally Brunovsky linear sistems to the study of the sub-monoid given by the image of natural map $\gamma:\mathbb{P}(R)\rightarrow K_{0}(R)$ for the base ring

\begin{corollary}\label{main}
The conmutative sub-monoid $\bigoplus_{\mathbb{N}}Im(\alpha)\subset \bigoplus_{\mathbb{N}} K_{0}(R)$ is precisely the stable equivalence classes of locally Brunovsky linear systems over $R$. 
\end{corollary}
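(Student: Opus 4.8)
The plan is to combine the identification of stable classes obtained at the end of Section~3 with the explicit monoid isomorphisms of Theorem~\ref{mainteo2}. By the Corollary characterising stable feedback classes, the set of stable feedback isomorphism classes of locally Brunovsky systems is exactly the image $Im(\gamma)$ of the completion map $\gamma:B_{R}^{iso}\to K_{0}(B_{R})$. It therefore suffices to transport $Im(\gamma)$ through the two isomorphisms $\mathcal{Z}:B_{R}^{iso}\xrightarrow{\sim}\bigoplus_{\mathbb{N}}\mathbb{P}(R)$ and $K_{0}(B_{R})\xrightarrow{\sim}\bigoplus_{\mathbb{N}}K_{0}(R)$ of Theorem~\ref{mainteo2} and to check that it lands on $\bigoplus_{\mathbb{N}}Im(\alpha)$, where $\alpha:\mathbb{P}(R)\to K_{0}(R)$ denotes the natural completion map for the base ring (written $\gamma$ in the proof of Theorem~\ref{mainteo2}).

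The first step is to establish that the square
$$
\xymatrix{
B_{R}^{iso}\ar[r]^{\gamma}\ar[d]_{\mathcal{Z}} & K_{0}(B_{R})\ar[d] \\
\bigoplus_{\mathbb{N}}\mathbb{P}(R)\ar[r]^{\overline{\alpha}} & \bigoplus_{\mathbb{N}}K_{0}(R)
}
$$
commutes, where the vertical maps are the isomorphisms of Theorem~\ref{mainteo2} and $\overline{\alpha}=\bigoplus_{\mathbb{N}}\alpha$ is the componentwise completion. This is essentially built into the proof of Theorem~\ref{mainteo2}(2): there the right-hand vertical isomorphism is produced precisely by verifying that $\overline{\alpha}$ satisfies the universal property of the Grothendieck completion of the monoid $\bigoplus_{\mathbb{N}}\mathbb{P}(R)$. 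Since both $\gamma$ transported along $\mathcal{Z}$ and $\overline{\alpha}$ are completion maps of the same monoid into the same abelian group, the uniqueness clause of that universal property forces them to agree, which is the asserted commutativity.

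Granted the square commutes, the vertical maps being isomorphisms force $Im(\gamma)$ to correspond to $Im(\overline{\alpha})$. The concluding step is the elementary observation that for a componentwise map of direct sums one has $Im(\overline{\alpha})=\bigoplus_{\mathbb{N}}Im(\alpha)$: every finite-support sequence with entries in $Im(\alpha)$ is hit by lifting each entry along $\alpha$, and conversely each entry of an element of $Im(\overline{\alpha})$ lies in $Im(\alpha)$. Chaining these identifications shows that the stable equivalence classes correspond exactly to $\bigoplus_{\mathbb{N}}Im(\alpha)\subset\bigoplus_{\mathbb{N}}K_{0}(R)$. I expect the only delicate point to be the commutativity of the square, that is, verifying that the isomorphism $K_{0}(B_{R})\cong\bigoplus_{\mathbb{N}}K_{0}(R)$ genuinely intertwines the two completion maps rather than being merely an abstract isomorphism of abelian groups; this is handled cleanly by the uniqueness part of the universal property of the Grothendieck group, the same tool that produced the isomorphism, while the remaining identifications are routine bookkeeping with finite-support sequences.
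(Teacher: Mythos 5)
Your proposal is correct and follows essentially the route the paper intends: the paper leaves this corollary without an explicit proof, deducing it directly from the earlier corollary identifying stable classes with $Im(\gamma)\subset K_{0}(B_{R})$ together with the isomorphisms of Theorem~\ref{mainteo2}, which is exactly what you spell out. Your added care about the commutativity of the square (via uniqueness in the universal property of the Grothendieck completion) and the identification $Im(\overline{\alpha})=\bigoplus_{\mathbb{N}}Im(\alpha)$ fills in details the paper takes for granted, including the notational slip between $\alpha$ and $\gamma$.
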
 

\begin{corollary}
We give a brief summary of main characterizations for locally Brunovsky linear systems:
\begin{enumerate}
\item Two linear systems $\Sigma_i$ in $B_R$ are feedback isomorphic if and only if their images under $Z$-map agree on $\bigoplus_{\mathbb{N}}\mathbb{P}(R)$, i.e. $Z(\Sigma_1)=Z(\Sigma_2)$.
\item Two linear systems $\Sigma_i$ in $B_R$ are stable feedback isomorphic if and only if their images under $Z$-map agree on $K_0(R)^{\infty}$, i.e. $\gamma(Z(\Sigma_1))=\gamma(Z(\Sigma_2))$.

\end{enumerate}
\end{corollary}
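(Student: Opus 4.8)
The plan is to assemble both parts from results already in hand, using the $Z$-map as a dictionary between locally Brunovsky systems and sequences of projective modules. For part (1), the claim is exactly that $\mathcal{Z}:B_R^{iso}\to\mathbb{P}(R)^{\infty}$ is a well-defined bijection. First I would note that the invariant modules $N_i,M_i,I_i,Z_i$ are constructed intrinsically from $(X,f,B)$, so a feedback isomorphism $\phi$ carries the filtration of $\Sigma_1$ onto that of $\Sigma_2$ and induces isomorphisms $Z_i^{\Sigma_1}\cong Z_i^{\Sigma_2}$; this yields the forward implication (feedback isomorphic $\Rightarrow$ equal $Z$-sequences). The converse — that agreement of the $Z$-sequences forces a feedback isomorphism — is the completeness of the invariants, which is precisely the classification theorem of \cite{M}; I would invoke it directly rather than reprove it. Together these give the stated equivalence.

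For part (2), I would work inside the commutative square relating the $Z$-map to the two Grothendieck completions. Its left edge is the natural completion $\gamma:B_R^{iso}\to K_0(B_R)$; its top edge is the monoid isomorphism $\mathcal{Z}:B_R^{iso}\to\bigoplus_{\mathbb{N}}\mathbb{P}(R)$, which respects $\oplus$ by Lemma \ref{directsuminv}; its bottom edge is the isomorphism $K_0(B_R)\cong\bigoplus_{\mathbb{N}}K_0(R)$ of Theorem \ref{mainteo2}; and its right edge is the slotwise completion $\overline{\gamma}=\bigoplus\gamma$ induced by $\gamma:\mathbb{P}(R)\to K_0(R)$. The key step is to verify that this square commutes, i.e. that under these identifications the system-level completion $\gamma$ becomes the module-level completion applied in each slot. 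This is a consequence of the naturality of the Grothendieck construction: since $\mathcal{Z}$ and its inverse are monoid homomorphisms, they intertwine the two universal completion maps by the uniqueness clause in the universal property of $K_0$ already exploited in the proof of Theorem \ref{mainteo2}.

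With the square established, part (2) is a short diagram chase. By the Theorem characterizing stable isomorphism in $B_R$, one has $\Sigma_1\overset{s.i.}{\simeq}\Sigma_2$ if and only if $\gamma([\Sigma_1])=\gamma([\Sigma_2])$ in $K_0(B_R)$. Transporting this equality along the bottom isomorphism and using commutativity of the square, it becomes equivalent to $\overline{\gamma}(\mathcal{Z}(\Sigma_1))=\overline{\gamma}(\mathcal{Z}(\Sigma_2))$ in $\bigoplus_{\mathbb{N}}K_0(R)=K_0(R)^{\infty}$, which is exactly the condition written in the statement as $\gamma(Z(\Sigma_1))=\gamma(Z(\Sigma_2))$.

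I expect the main obstacle to be verifying the commutativity of this square — more precisely, confirming that the abstract completion $\gamma$ on $B_R^{iso}$ is carried by the $Z$-map onto the slotwise completion on $\bigoplus_{\mathbb{N}}\mathbb{P}(R)$, rather than onto some other homomorphism into $\bigoplus_{\mathbb{N}}K_0(R)$. Once this naturality is checked, everything else is either cited (\cite{M} for the completeness in part (1)) or a formal consequence of the universal property already used in Theorem \ref{mainteo2}.
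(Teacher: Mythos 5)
Your proposal is correct and follows exactly the route the paper intends: the corollary is stated without proof as an assembly of the classification theorem of \cite{M} (part 1), the theorem identifying stable isomorphism with equality under $\gamma$ in $K_{0}(B_{R})$, and Theorem \ref{mainteo2} (part 2). Your explicit verification that the square relating $\mathcal{Z}$, $\gamma$, and $\overline{\gamma}$ commutes --- via naturality of the Grothendieck completion applied to the monoid isomorphism $\mathcal{Z}$ --- is the one step the paper leaves tacit, and you handle it correctly.
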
 

To conclude we review an example of \cite{M}. Let $R=\mathbb{R}[x,y,z]/(x^2+y^2+z^2-1)$ be the coordinate ring of unit sphere $\mathbb{S}^2_{\mathbb{R}}\subseteq\mathbb{R}^3$ immersed into $3$-dimensional space. Consider the state-space $R^4$ and fix the standard basis $\{e_i\}$. Let the linear systems $\Sigma=(R^4,f,B)$ and $\Sigma^{\prime}=(R^4,f^{\prime},B)$; where, in the standard basis,
$$
f=\left(\begin{array}{cccc}0 & 0 & 0 & 0 \\0 & 0 & 0 & 0 \\0 & 0 & 0 & 0 \\1 & 0 & 0 & 0\end{array}\right), \hspace{0.2cm}
f^{\prime}=\left(\begin{array}{cccc}0 & 0 & 0 & 0 \\0 & 0 & 0 & 0 \\0 & 0 & 0 & 0 \\x & y & z & 0\end{array}\right)
$$
It is proven in \cite{M} that above linear systems are not feedback isomorphic because ... but note that they lie in the same class in $K_0(B_R)$ hence they are stable isomorphic. In fact both systems became feedback isomorphic by adding the trivial ancillary system $\Gamma(1)=(R,0,R)$. The reader can check that systems $\Gamma(1)\oplus\Sigma$ and $\Gamma(1)\oplus\Sigma^{\prime}$ are feedback isomorphic by means of the isomorphism of $R^5$ given, in standard basis by
$$
\phi=\left(\begin{array}{ccccc}1 & 0 & 0 & 0 & 0 \\x & 1 & 0 & 0 & 0 \\y & 0 & 1 & 0 & 0 \\z & 0 & 0 & 1 & 0 \\0 & 0 & 0 & 0 & 1\end{array}\right)
$$


\begin{thebibliography}{1}

\bibitem{Bass} H. Bass, Algebraic K-theory, Benjamin, 1968.

\bibitem{BBV} J. W. Brewer, J. W. Bunce, and F. S. Van Vleck, Linear Systems over Commutative Rings, Marcel Dekker, New York, 1986.

\bibitem{BK} J.W. Brewer, L. Klingler, Dynamic feedback over commutative rings, Lin. Algebra Appl., 98 (1988).

\bibitem{BK01} J.W. Brewer, L. Klingler, On feedback invariants for linear dynamical systems,  Lin. Algebra App., 325 (2001).

\bibitem{B} P. A. Brunovsky, A classification of linear controllable systems, Kibernetika, 3 (1970).

\bibitem{M} M. V. Carriegos, Enumeration of classes of linear systems via equations and via partitions in a ordered abelian monoid, Lin. Algebra App., 438 (2013).

\bibitem{C} J. L. Casti, Linear Dynamical Systems. Academic Press (1987).

\bibitem{HS} M. I. J. Hautus, E. D. Sontag, New results on pole-shifting for parametrized families of systems. J. Pure Appl. Algebra 40 (1986).

\bibitem{HT} J.A. Hermida-Alonso, M.T. Trobajo, The dynamic feedback equivalence over principal ideal domains, Lin. Algebra Appl., 368 (2003).

\bibitem{HCT} J.A. Hermida-Alonso, M.M. López-Cabeceira, M.T. Trobajo, When are dynamic and static feedback equivalent?, Lin. Algebra Appl., 405 (2005).

\bibitem{K} R. E. Kalman, Kronecker invariants and Feedback, In: Ordinary Differential Equations. Academic, 459-471 (1972).

\bibitem{ML} S. Mac Lane, Categories for the working mathematician. Springer, New York, 1971.

\bibitem{R} J. Rosemberg, Algebraic K-Theory and its Aplications, 1994.

\bibitem{VW} W.V. Vasconcelos, C.A. Weibel, Bcs rings, Journal of Pure and Applied Algebra, 52 (1988), 173-185

\bibitem{W} C.A. Weibel, The $K$-book, an introduction to Algebraic $K$-theory, $\beta$-ed. (2013/02/23), http://www.math.rutgers.edu/~weibel/Kbook/Kbook.pdf


\end{thebibliography}
\end{document}